\documentclass[a4paper]{amsart}

\usepackage{amsmath}
\usepackage{amsthm}
\usepackage{amssymb}
\usepackage{xspace}
\usepackage{enumerate}
\usepackage{mathtools}

\usepackage{color}

\theoremstyle{plain}
\newtheorem{theorem}{Theorem}
\newtheorem{lemma}[theorem]{Lemma}

\newcommand{\rea}{\ensuremath{\mathbf{R}}\xspace}
\newcommand{\nat}{\ensuremath{\mathbf{N}}\xspace}

\newcommand{\intd}[1]{\,\mathrm{d}#1 \,}
\newcommand{\hmeas}{\ensuremath{\mathcal{H}}\xspace}
\newcommand{\seq}[1]{\ensuremath{\underline{#1}}\xspace}
\newcommand{\ourP}[1]{\ensuremath{\mathbf{P}_{\!#1}}\xspace}
\newcommand{\overbar}[1]{{\mkern 3mu\overline{\mkern-3mu#1\mkern-1mu}\mkern 1mu}}
\newcommand{\oball}[2]{\ensuremath{B{\left(#1, #2\right)}}\xspace}
\newcommand{\cball}[2]{\ensuremath{\overbar{B}{\left(#1, #2\right)}}\xspace}
\newcommand{\crect}[2]{\ensuremath{\overbar R \left(#1, #2\right)}\xspace}

\newcommand\restr[2]{{
  \left.\kern-\nulldelimiterspace
  #1
  \vphantom{|}
  \right|_{#2}
  }}

\DeclareMathOperator{\dimh}{dim_H}
\DeclareMathOperator*{\E}{\mathbf{E}}

\begin{document}

\title[Limsup sets of random rectangles]%
{Hausdorff dimension of limsup sets of random rectangles in
products of regular spaces}

\author[F. Ekstr\"om]{Fredrik Ekstr\"om$^1$}
\address{Department of Mathematical Sciences, P.O. Box 3000,
         90014 University of Oulu, Finland$^{1,2,3,4}$}
\email{fredrik.ekstrom@oulu.fi$^1$}

\author[E. J\"arvenp\"a\"a]{Esa J\"arvenp\"a\"a$^2$}
\email{esa.jarvenpaa@oulu.fi$^2$}

\author[M. J\"arvenp\"a\"a]{Maarit J\"arvenp\"a\"a$^3$}
\email{maarit.jarvenpaa@oulu.fi$^3$}

\author[V. Suomala]{Ville Suomala$^4$}
\email{ville.suomala@oulu.fi$^4$}

\thanks{We acknowledge the support of the Centre of Excellence in Analysis and
Dynamics, funded by the Academy of Finland. We thank P.~Mattila,
S.~Seuret, P.~Shmerkin and J.~Tyson for useful discussions, and
the program Fractal Geometry and Dynamics, held at Institut Mittag--Leffler.}

\subjclass[2010]{28A80, 60D05}

\begin{abstract}
The almost sure Hausdorff dimension of the limsup set of randomly
distributed rect\-angles in a product of Ahlfors regular metric spaces is
computed in terms of the singular value function of the rectangles.
\end{abstract}

\maketitle

\section{Introduction}\label{intro}

Limsup sets appear in various fields of mathematics. They can be traced
back to the prominent works of Borel \cite{Borel97} and Cantelli
\cite{Cantelli17} leading to the Borel-Cantelli lemma.
Limsup sets are encountered, for example, in the study of Besicovitch-Eggleston
sets
concerning $k$-adic expansions of real numbers
\cite{Besicovitch34,Eggleston49} as well as in the
classical results of Khintchine \cite{Khintchine26} and Jarn\'\i k
\cite{Jarnik32} concerning well-approximable numbers, along with other questions
related to  Diophantine approximation. Recently, a considerable amount of
attention has been paid to dimensional properties of random and dynamically
defined limsup sets which are a class of limsup sets.
Given a space $X$ and a sequence $(A_n)$ of subsets of $X$, the limsup set
$E$ consists of those points in $X$ which are covered by infinitely many sets
$A_n$, that is, 
\[
E=\limsup_nA_n=\bigcap_{n=1}^\infty\bigcup_{k=n}^\infty A_k.
\]
If the space $X$ is a group or the sets $A_n$ are balls,
the limsup set can be made random by translating the sets $A_n$.
In the literature there are two main lines of randomness:~ the sets are
translated according to some
probability measure or the translations are determined by a typical orbit of
a measure preserving dynamical system.

The study of dimensional properties of random limsup sets was initiated by Fan
and Wu \cite{FanWu04}. They derived the following formula for the Hausdorff
dimension of a typical random limsup set generated by arcs that are randomly
placed on the circle
according to the Lebesgue measure:
\begin{equation}\label{dimarcs}
\dimh E=\inf\left\{t;\, \sum_n l_n^t<\infty\right\}\wedge 1,
\end{equation}
where $l_n$ is the length of the arc $A_n$ and $x\wedge y$ is the minimum of
two numbers $x$ and $y$. Durand \cite{Durand10} generalised this result
by proving a dichotomy result on Hausdorff measures defined in terms of
general gauge functions. His result implies that, replacing lengths of
arcs by radii of balls, formula \eqref{dimarcs} is valid for random
limsup sets generated by balls which are uniformly distributed on
the $d$-dimensional torus.
As shown in \cite{JJKLS14}, in the case where generating sets are balls
a shorter proof of \eqref{dimarcs}, which also
extends to Ahlfors regular metric spaces, can be obtained using the mass
transference principle proved by Beresnevich and Velani
\cite{BeresnevichVelani06}.
J\"arvenp\"a\"a et al.~ \cite{JJKLS14} derived a
formula for the almost sure Hausdorff
dimension of random limsup sets induced by rectangle-like sets uniformly
distributed on the $d$-dimensional torus. The formula is similar to
\eqref{dimarcs}\,---\,only the arc lengths are replaced by the
singular value functions of rectangles
(for a modification of this definition see \eqref{singularvalue}).
Persson \cite{Persson15} proved a lower bound for the Hausdorff dimension which
is valid for random limsup sets generated by general open sets. Finally,
in \cite{FJJSUP}, Feng et al.~ developed a general dimension theory
for random limsup sets. In \cite{FJJSUP} the results are valid in Riemann
manifolds, generating sets are assumed to be only Lebesgue measurable
satisfying a necessary density assumption and any distribution will do as long
as it is not purely singular with respect to the Riemann volume.

Dimensional properties of random limsup sets driven by singular measures have
so far been studied in the case where the generating sets are balls in
Euclidean spaces or in the shift space. In
\cite{FanSchmelingTroubetzkoy13}, Fan et al.~ focused on dynamically defined
limsup sets in the circle, where
the balls are distributed along typical orbits under the angle doubling map
with respect to a Gibbs measure. Seuret \cite{SeuretUP} considered the
case where balls are placed
randomly according to a Gibbs measure. Both instances lead to the same
conclusion: formula \eqref{dimarcs} has to be modified by taking into
account the multifractal spectrum of the Gibbs measure. Ekstr\"om and Persson
studied in \cite{EkstromPerssonUP} limsup sets generated by balls randomly
distributed according to a general measure. They derived upper
and lower bounds for the Hausdorff dimension of a typical random limsup set.
These bounds agree and give a formula for the Hausdorff dimension for several
classes of measures.

In this paper, we initiate the study of dimensional properties of random
limsup sets generated by more general sets than balls and distributed
according to a singular measure. Compared to the cases where the generating
sets are balls or the measure is non-singular, the situation turns out to be
subtler. For example, consider in the plane a limsup set generated by
rectangles $R_n=[-r_{n,1},r_{n,1}]\times [-r_{n,2},r_{n,2}]$ such that
$r_{n,1}\le r_{n,2}$ for all $n$. Suppose that the driving measure is the
1-dimensional Hausdorff measure restricted to a unit line segment making an
angle $\alpha\in [0,\frac\pi 2]$ with the $x$-axis. One easily sees that for
$\alpha<\frac\pi 2$, the dimension of the limsup set is determined by the
sequence $(r_{n,1})$ while for $\alpha=\frac\pi 2$ only the sequence $(r_{n,2})$
plays a role. Thus there is no dimension formula involving
only the shapes of the generating sets and the multifractal spectrum of the
measure. Therefore, more refined information is needed. We will concentrate
on random limsup sets generated by rectangles in products of Ahlfors regular
metric spaces distributed according to a regular measure. Our main theorem
(Theorem~\ref{thm:main}) gives the almost sure Hausdorff
dimension of random limsup sets in this setting. We also illustrate
(Theorem~\ref{convthm}) how our method can be applied to give a new simpler
proof of the main theorem in \cite{JJKLS14}.

The paper is organised as follows. The main theorem is stated and an outline of
the proof is given in Section~\ref{overview}. In Section~\ref{regularspace} we
prove auxiliary results needed when verifying the upper bound of the dimension
in Section~\ref{upperbound}. Section~\ref{auxiliary} is devoted to 
preliminary results aiming at the completion of the main theorem by
establishing a lower bound for the dimension in Section~\ref{mainproofsection}.
Finally, in Section~\ref{altsection} we give a further application of our method.

\section{Main result}\label{overview}

Let $X_1, \ldots, X_d$ be metric spaces and consider the product space
$X_1^d = \bigtimes_{i = 1}^d X_i$ with the metric
	\[
	d(x, \widetilde x) = \max_{1 \leq i \leq d} d_i(x_i, \widetilde x_i).
	\]
The closed \emph{rectangle} in $X_1^d$ with centre $x = (x_1, \ldots, x_d)$
and ``side radii'' $r = (r_1, \ldots, r_d)$ is the set
	\[
	\crect{x}{r} = \bigtimes_{i = 1}^d \cball{x_i}{r_i},
	\]
where $\cball{x_i}{r_i}$ is the closed ball in $X_i$ with radius $r_i$ centred
at $x_i$. If $\seq x = (x_n)$ is a sequence of points
$x_n = (x_{n, 1}, \ldots, x_{n, d})$
in $X_1^d$ and $\seq r = (r_n)$ is a sequence of $d$-tuples
$r_n = (r_{n, 1}, \ldots, r_{n, d})$ of positive numbers, define
	\[
	E_{\seq r}(\seq x) = \limsup_n \crect{x_n}{r_n}.
        \]

Next, let $\mu$ be a Borel probability measure on $X_1^d$ and define
the probability space $(\Omega, \ourP\mu)$ by $\Omega = (X_1^d)^\nat$ and
$\ourP\mu = \mu^\nat$. A sample $\omega = (\omega_n)$ from this probability space
represents a sequence of points $\omega_n = (\omega_{n, 1},\ldots, \omega_{n, d})$
in $X_1^d$, chosen independently according to $\mu$. The Hausdorff dimension of
$E_{\seq r}$ is almost surely constant with respect to $\ourP\mu$, since the event
that it is less than any given value is a tail event. (For the
proof of measurability of this event, see \cite[Lemma 3.1]{JJKLSX17}.) Let
	\[
	f_\mu(\seq r) = \text{$\ourP\mu$-a.s.~value of } \dimh E_{\seq r}.
	\]

For $r = (r_1, \ldots, r_d)$ and $s = (s_1, \ldots, s_d)$ such that
$r_i > 0$ and $s_i \geq 0$ for every $i$, let $\tau$ be
a permutation of $\{1, \ldots, d\}$ such that
$r_{\tau(1)} \geq \ldots \geq r_{\tau(d)}$ and define the
\emph{singular value function} $\Phi_r^s$ on
$[0, s_1 + \ldots + s_d]$ by
	\begin{equation}\label{singularvalue}
        \Phi_r^s(t) = r_{\tau(i)}^{t - \sum_{j = 1}^{i - 1} s_{\tau(j)}}
	\cdot \prod_{j = 1}^{i - 1}r_{\tau(j)}^{s_{\tau(j)}}
	\,\text{ for }
	t \in [s_{\tau(1)} + \ldots + s_{\tau(i - 1)},
	s_{\tau(1)} + \ldots + s_{\tau(i)}].
        \end{equation}
The product $\Phi_r^s(t)$ is formed by distributing a ``total exponent''
$t$ among the bases $r_1, \ldots, r_d$, giving an exponent of at most $s_i$
to the base $r_i$, and prioritising larger bases over smaller ones.

A Borel measure $\mu$ on a metric space $X$ is \emph{$s$-regular} if there
is a constant $c \geq 1$ such that
$c^{-1} r^s \leq\mu\left(\oball{x}{r}\right)\leq c r^s$
for every $x$ in $X$ and $r$ in $[0, 2|X|]$.

\begin{theorem}\label{thm:main}
For $i = 1, \ldots, d$, let $X_i$ be a compact metric space and let
$\mu_i$ be an $s_i$-regular Borel probability measure on $X_i$.
Let $\mu = \bigtimes_{i = 1}^d \mu_i$. Then
	\[
	f_\mu(\seq r) = \inf\left\{
	t; \, \sum_n \Phi_{r_n}^s(t) < \infty
	\right\} \wedge \left(
	s_1 + \ldots + s_d
	\right).
	\]
\end{theorem}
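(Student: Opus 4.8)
Write $S = s_1 + \ldots + s_d$ and $t^* = \inf\{t;\, \sum_n \Phi_{r_n}^s(t) < \infty\}$, so the claim is $f_\mu(\seq r) = t^* \wedge S$. Since each $\mu_i$ is $s_i$-regular, the product $\mu$ is $S$-regular for the max metric on $X_1^d$, whose balls are the cubes $\crect{x}{(\rho,\ldots,\rho)}$. The starting point is the geometric meaning of $\Phi$: covering the rectangle $\crect{x_n}{r_n}$ by cubes of a common radius $\rho$ requires, by regularity of the factors, $\asymp \prod_{i:\,r_{n,i}\geq\rho}(r_{n,i}/\rho)^{s_i}$ of them, so the associated $t$-sum is $\asymp \rho^t\prod_{i:\,r_{n,i}\geq\rho}(r_{n,i}/\rho)^{s_i}$. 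This is a power of $\rho$ on each range where the set of subdivided directions is fixed, so its minimum is attained at the breakpoint where the exponent changes sign, namely at the scale $\rho_n$ (one of the side radii $r_{n,1},\ldots,r_{n,d}$) singled out by the interval of \eqref{singularvalue} containing $t$, and the minimal value is exactly $\Phi_{r_n}^s(t)$. Thus $\Phi_{r_n}^s(t)$ is, up to constants, the least $t$-content of $\crect{x_n}{r_n}$, realised by $\asymp \Phi_{r_n}^s(t)/\rho_n^t$ cubes of radius $\rho_n$.

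\emph{Upper bound (deterministic).} Let $t > t^*$, so $\sum_n \Phi_{r_n}^s(t) < \infty$; in particular $\Phi_{r_n}^s(t) \geq \rho_n^t$ forces $\rho_n \to 0$. Covering each $\crect{x_k}{r_k}$ by its optimal family of radius-$\rho_k$ cubes, the subfamily with $k \geq m$ covers $\bigcup_{k\geq m}\crect{x_k}{r_k} \supseteq E_{\seq r}(\seq x)$, has mesh $\leq 2\sup_{k\geq m}\rho_k \to 0$, and total $t$-sum $\asymp \sum_{k\geq m}\Phi_{r_k}^s(t) \to 0$. Hence $\mathcal{H}^t(E_{\seq r}(\seq x)) = 0$ for every sequence $\seq x$, so $\dimh E_{\seq r}(\seq x) \leq t$; letting $t \downarrow t^*$ gives $\dimh E_{\seq r}(\seq x) \leq t^*$, and since $E_{\seq r}(\seq x) \subseteq X_1^d$ with $X_1^d$ being $S$-regular we also get $\dimh E_{\seq r}(\seq x) \leq S$. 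This proves $f_\mu(\seq r) \leq t^* \wedge S$.

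\emph{Lower bound (almost sure).} Fix $t < t^* \wedge S$, so $\sum_n \Phi_{r_n}^s(t) = \infty$ and $t < S$; it suffices to produce, inside $E_{\seq r}$, a random Cantor set $F$ carrying a measure $\nu$ with $\nu(\oball{x}{\rho}) \lesssim \rho^{t'}$ for a given $t' < t$, since then $\dimh E_{\seq r} \geq \dimh F \geq t'$ by the mass distribution principle and $t' \uparrow t$ finishes. I would define $F$ through a nested family of cubes: given a surviving cube $Q$ of radius $R$, choose a long block of indices with $\sum_{n\in\text{block}}\Phi_{r_n}^s(t) \asymp R^{t - S}$ (possible as the series diverges and, since $t < S$, the target is large), keep the indices whose rectangle meets $Q$, and take as children of $Q$ a maximal $\rho_n$-separated subfamily of the effective cubes (radius $\rho_n$) contained in $Q$, splitting $\nu(Q)$ among them in proportion to $\rho_n^t$. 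The design is calibrated so that the expected total $t$-content of the children is $\asymp R^t$: for a rectangle contained in $Q$ this content is $\asymp \Phi_{r_n}^s(t)$ and the centre lands in $Q$ with probability $\asymp R^S$, giving $\asymp R^S\sum_{\text{block}}\Phi_{r_n}^s(t) \asymp R^t$, while rectangles longer than $Q$ meet it in slabs and are treated by the same scale bookkeeping. Independence of the $\omega_n$ then upgrades this expectation to an almost-sure lower bound, uniformly over all cubes and levels, by a second-moment (Paley--Zygmund) estimate together with Borel--Cantelli. As each point of $F$ lies at every level in an effective cube of a distinct rectangle, $F \subseteq E_{\seq r}$, so $\dimh E_{\seq r} \geq t'$ almost surely.

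\emph{Main obstacle.} The crux is the lower bound, where two difficulties absent in the ball case interact. First, \emph{anisotropy}: the effective cubes of a single rectangle fill a thin region, and a rectangle longer than the current cube meets it in a lower-dimensional slab, so at intermediate scales $\rho \in (\rho_n, \max_i r_{n,i})$ a ball of radius $\rho$ can meet many children; verifying $\nu(\oball{x}{\rho}) \lesssim \rho^{t'}$ amounts to a scale-by-scale comparison that holds exactly because $t$ lies in the prescribed interval of \eqref{singularvalue}, which is also why $\rho_n$ must be the $\Phi$-optimal scale rather than a side length fixed in advance. Second, \emph{a.s.\ survival with the sharp exponent}: the random weights must be controlled simultaneously over the exponentially many cubes at every level, which needs the variance bounds furnished by independence together with the divergence $\sum_n \Phi_{r_n}^s(t) = \infty$; it is the strict inequality $t < S$ that supplies the slack — effective cubes genuinely smaller in $\mu$-measure than their parent — required both to separate the children and to close the Frostman estimate.
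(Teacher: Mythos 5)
Your upper bound is essentially the paper's own argument (Lemma~\ref{svfublemma}): cover each rectangle by comparably many cubes at the $\Phi$-optimal scale and sum the $t$-contents. The lower bound is where you diverge from the paper, and it is also where the proposal is incomplete. You propose a direct random Cantor-set/Frostman-measure construction, and you yourself isolate the two points on which the whole argument turns --- the second-moment problem for rectangles longer than the current cube $Q$, and the intermediate-scale Frostman estimate $\nu(\oball{x}{\rho}) \lesssim \rho^{t'}$ for $\rho$ between $\rho_n$ and $\max_i r_{n,i}$ --- but you do not prove either. These are not details. When a rectangle is much longer than $Q$ in some direction, its effective cubes enter $Q$ in correlated slabs, so the variance of the child count is of a larger order than the square of the mean, and the Paley--Zygmund step you invoke does not close without a genuinely new conditional estimate; likewise, saying the Frostman bound ``holds exactly because $t$ lies in the prescribed interval of \eqref{singularvalue}'' states what must be proved rather than proving it. (One of your worries is spurious, though: you do not need almost-sure survival uniformly over all cubes and levels. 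Since $\{\dimh E_{\seq r} \geq t'\}$ is a tail event, positive probability of success already gives probability one by the zero--one law, as the paper notes when defining $f_\mu$.)

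The paper circumvents the construction entirely, and you should compare your route with its strategy. Its lower bound is an induction on $d$ (after reducing to $r_{n,1}\geq\ldots\geq r_{n,d}$ by splitting $\seq r$ into finitely many subsequences): if the $(d-1)$-dimensional sum already diverges at $t$, project to $X_1^{d-1}$ and use Lemma~\ref{projectionlemma}; otherwise write $t = s_1+\ldots+s_{d-1}+u$, fix $x' \in X_1^{d-1}$, note that the fibre $E_{\seq r}^{x'}(\omega)$ is a limsup of balls in $X_d$ whose radii almost surely have divergent $u$-sum (Lemma~\ref{asdivsumlemma}), apply the known $d=1$ case to get $\dimh E_{\seq r}^{x'}(\omega) \geq u$, and then combine Fubini with Federer's slicing inequality (Lemma~\ref{dimhlemma}) to conclude $\dimh E_{\seq r}(\omega) \geq s_1+\ldots+s_{d-1}+u$. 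This sidesteps both of your obstacles because the only random covering argument ever needed is the one-dimensional one, which is already known. As written, your proposal correctly identifies the crux of the direct approach but leaves it open; either supply the slab second-moment estimate and the multi-scale Frostman verification, or adopt the induction-plus-slicing reduction.
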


In case $r_{n, i}$ is decreasing in both $n$ and $i$,
and there is a positive number $\alpha$ such that
$r_{n, i} \leq n^{-\alpha}$ for every $i$ for all but finitely many
$n$, a bit of computation shows that the expression for $f_\mu(\seq r)$
in the theorem equals
	\[
	\min_{1 \leq i \leq d} \limsup_{n \to \infty}
	\left(
	\frac{\log n}{-\log r_{n, i}}  +
	\sum_{j = 1}^{i - 1} s_j \left( 1 -
	\frac{\log r_{n, j}}{\log r_{n, i}} \right)
	\right).
	\]

Theorem~\ref{thm:main} can be applied also for limsup sets driven by purely
singular measures. For example, let $s,t\in [0,1]$ with $s\wedge t<1$ and let
$X,Y\subset [0,1]$ be compact sets such
that the restrictions of the $s$-dimensional and $t$-dimensional Hausdorff
measures to $X$ and $Y$ are $s$-regular and $t$-regular, respectively. Then
the product measure is singular with respect to the 2-dimensional Lebesgue
measure. Since $X\times Y\subset [0,1]^2$ is closed, the limsup set defined
in $[0,1]^2$ is included in $X\times Y$ provided $(r_n)$ tends to zero. Thus
Theorem~\ref{thm:main} can be employed.

\subsection*{Overview of the proof}
The upper bound for $f_\mu$ is obtained in a standard way by showing that
each rectangle $\crect{x_n}{r_n}$ has a cover by balls whose contribution
to the sum in the definition of the $t$-dimensional Hausdorff measure is
comparable to $\Phi_{r_n}^s(t)$. See Lemma~\ref{svfublemma} below.

The proof of the lower bound is done by induction on $d$.
For $d = 1$, the statement of Theorem~\ref{thm:main} reduces to the well-known
result that if $X$ is a compact metric space and $\mu$ is an $s$-regular Borel
probability measure on $X$, then for $\ourP\mu$-a.e.~$\omega$
	\[
	\dimh \left( \limsup_n \cball{\omega_n}{r_n} \right) =
	\inf \left\{t; \, \sum_n r_n^t < \infty\right\} \wedge s
	\]
(see \cite[Theorem~2.1]{JJKLSX17} and~\cite[Proposition~4.7]{JJKLS14}).
Proving the lower bound for $d \geq 2$, it may be assumed that every
$r_n$ is decreasing, that is,
that $r_{n, 1} \geq \ldots \geq r_{n, d}$, since there are only finitely
many ways in which a $d$-tuple $r = (r_1, \ldots, r_d)$ can be ordered
(see the proof of the theorem in Section~\ref{mainproofsection} for details).
Under this assumption, the induction step is as follows. Let
$\pi$ be the projection $X_1^d \to X_1^{d - 1}$, and if $a$ is something
$d$-dimensional (a tuple, a sequence of tuples or a
product measure), let $a'$ be the image of $a$ under $\pi$.
Then for \ourP\mu-a.e.~$\omega$,
	\[
	\dimh E_{\seq r}(\omega) \geq
	\dimh \pi \left( E_{\seq r}(\omega) \right) \geq
	\dimh E_{\seq r'}(\omega')
	\]
(see Lemma~\ref{projectionlemma} below), and $\Phi_r^s(t) = \Phi_{r'}^{s'}(t)$
for $t \in [0, s_1 + \ldots + s_{d - 1}]$. Thus if
	\[
	\sum_n \Phi_{r_n}^s (s_1 + \ldots + s_{d - 1}) < \infty
	\]
then the theorem for $d - 1$ immediately implies the theorem for $d$.

Otherwise, there is some $u \in [0, s_d]$ such that
	\[
	\sum_n \Phi_{r_n}^s (s_1 + \ldots + s_{d - 1} + u) = \infty.
	\]
From this it can be deduced, using the theorem for $d = 1$, that for any
fixed $x' \in X_1^{d - 1}$, the fibre
	\[
	E_{\seq r}^{x'}(\omega) = \left\{
	x_d \in X_d; \, (x', x_d) \in E_{\seq r}(\omega)
	\right\}
	\]
almost surely has Hausdorff dimension greater than or equal to $u$. By
Fubini's theorem it follows that for \ourP\mu-a.e.~$\omega$
	\[
	\mu'\left( \left\{ x'; \, \dimh E_{\seq r}^{x'}(\omega) \geq u \right\}
	\right)	= 1.
	\]
Since the support of $\mu'$ has dimension $s_1 + \ldots + s_{d - 1}$, this
implies (by Lemma~\ref{dimhlemma} below) that
	\[
	\dimh E_{\seq r}(\omega) \geq s_1 + \ldots + s_{d - 1} + u,
	\]
and then the theorem is proved by taking supremum over $u$.

\subsection*{Notation and conventions}
The open and closed balls with centre $x$ and radius $r$ are denoted by
$\oball{x}{r}$ and $\cball{x}{r}$, respectively. All rectangles that
appear are closed, and denoted as
$\crect{x}{r} = \bigtimes_{i = 1}^d \cball{x_i}{r_i}$,
where $x = (x_1, \ldots, x_d)$ and $r = (r_1, \ldots, r_d)$. All measures that
appear are Borel measures, but this will not be explicitly stated every
time\,---\,thus ``a measure'' should be read as ``a Borel measure.''

\section{Regular spaces}\label{regularspace}

A \emph{regular space} is a metric space $X$ together with a measure
$\mu$ on $X$ for which there exist a constant $c \geq 1$ and a non-negative
number $s$ such that for every $x \in X$ and $r \in [0, 2|X|]$,
	\[
	c^{-1} r^s \leq \mu\left( \oball{x}{r} \right) \leq c r^s.
	\]
When the exponent or the constant and the exponent need to be emphasised,
$X$ will be called $s$-regular or $(c, s)$-regular. The inequalities in
the definition are satisfied for all $r$ if and only if they are
satisfied  for all $r$ with $\cball{x}{r}$ in place of $\oball{x}{r}$.

A subset $A$ of a metric space is \emph{$r$-sparse} if the distance between
any two distinct points in $A$ is greater than or equal to $r$. If $A$
is a maximal $r$-sparse subset of $B$, then
$B \subset \bigcup_{x \in A} \cball{x}{r}$.

\begin{lemma} \label{sparselemma}
Let $X$ be a $(c, s)$-regular space and let $r$ and $R$ be positive numbers
such that $r \leq 2R$ and $R \leq |X|$. Let $x_0 \in X$. Then there exists a
maximal $r$-sparse subset $A$ of $\cball{x_0}{R}$, and every
such $A$ satisfies
	\[
	c^{-2} \left(\frac{R}{r}\right)^s
	\leq \# A \leq
	4^s c^2 \left(\frac{R}{r} \right)^s.	
	\]

\begin{proof}
First consider \emph{any} $r$-sparse subset $A$ of $\cball{x_0}{R}$.
Then since the balls $\{ \oball{x}{r / 2} \}_{x \in A}$ are disjoint
and included in $\cball{x_0}{2R}$,
	\[
	\#A \cdot c^{-1} (r / 2)^s \leq
	\sum_{x \in A} \mu\left(\oball{x}{r / 2}\right)
	\leq \mu\left(\cball{x_0}{2R}\right) \leq c \left(2R\right)^s.
	\]
The upper bound for $\# A$ follows. It also follows that starting
with $A = \emptyset$ and repeatedly adding points from $\cball{x_0}{R}$
such that $A$ is always $r$-sparse, the process must end after
a bounded number of steps. At that point, a maximal $r$-sparse subset
of $\cball{x_0}{R}$ is obtained. Now, assuming that $A$ is maximal,
	\[
	c^{-1} R^s \leq \mu\left(\oball{x_0}{R}\right) \leq \sum_{x \in A}
	\mu\left( \cball{x}{r} \right) \leq \# A \cdot cr^s,
	\]
from which the lower bound for $\#A$ follows.
\end{proof}
\end{lemma}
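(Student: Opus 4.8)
The plan is to run the standard packing–covering argument, in which the two-sided regularity bound converts geometric facts about $A$ into comparisons of powers of $r$ and $R$. The guiding principle is that \emph{sparseness} forces small balls around the points of $A$ to be disjoint, which caps $\#A$ from above, while \emph{maximality} forces slightly larger balls to cover $\cball{x_0}{R}$, which bounds $\#A$ from below.

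For the upper bound, and simultaneously for existence, I would begin with an \emph{arbitrary} $r$-sparse subset $A$ of $\cball{x_0}{R}$. Because any two distinct points of $A$ are at distance at least $r$, the open balls $\oball{x}{r/2}$, $x \in A$, are pairwise disjoint; and since each is centred in $\cball{x_0}{R}$ and has radius $r/2 \le R$ (this is where $r \le 2R$ enters), they all lie in $\cball{x_0}{2R}$. Summing the lower regularity estimate over $x \in A$ and comparing with the upper regularity estimate for $\mu(\cball{x_0}{2R})$ yields $\#A \cdot c^{-1}(r/2)^s \le c(2R)^s$, i.e. $\#A \le 4^s c^2 (R/r)^s$. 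Since this bound is uniform over \emph{all} $r$-sparse subsets, the greedy process of repeatedly adjoining points while keeping $A$ sparse must terminate after finitely many steps, and what it produces is a maximal $r$-sparse set; this disposes of existence.

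For the lower bound I would now take $A$ maximal and invoke the covering property recorded just before the statement, namely $\cball{x_0}{R} \subset \bigcup_{x \in A}\cball{x}{r}$. Applying $\mu$ and using subadditivity with the upper regularity bound $\mu(\cball{x}{r}) \le c r^s$ on the right, together with the lower regularity bound $\mu(\oball{x_0}{R}) \ge c^{-1} R^s$ on the left, gives $c^{-1} R^s \le \#A \cdot c r^s$, which is exactly $\#A \ge c^{-2}(R/r)^s$.

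I do not expect any genuine obstacle: once sparseness is paired with disjoint balls of radius $r/2$ and maximality with the covering by balls of radius $r$, the conclusion is immediate. The only points that need care are bookkeeping ones\,---\,verifying that the radii $r/2$, $R$ and $2R$ all lie in the range $[0, 2|X|]$ on which regularity is available (which follows from $r \le 2R \le 2|X|$ and $R \le |X|$), and staying consistent about open versus closed balls so that both the strict disjointness in the packing step and the covering in the maximality step are valid. These are settled by the stated hypotheses and by the earlier remark that the regularity inequalities hold equally for open and closed balls.
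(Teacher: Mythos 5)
Your proposal is correct and follows essentially the same route as the paper's proof: the packing argument with disjoint balls of radius $r/2$ inside $\cball{x_0}{2R}$ for the upper bound and existence via the terminating greedy process, and the covering by balls of radius $r$ from maximality for the lower bound. The additional bookkeeping you mention (radii staying in $[0,2|X|]$, open versus closed balls) is handled correctly and consistently with the remarks preceding the lemma.
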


\begin{lemma} \label{ballcoverlemma}
Let $X$ be a $(c, s)$-regular space and let $r$ and $R$ be positive
numbers such that $r \leq 2R$. Let $B$ be a closed
ball  of radius $R$ in $X$. Then $B$ can be covered by $4^s c^2 (R / r)^s$
closed balls of radius $r$.

\begin{proof}
It may be assumed that $R \leq |X|$.
Let $A$ be a maximal $r$-sparse subset of $B$. Then
$B \subset \bigcup_{x \in A} \cball{x}{r}$, and
$\# A \leq 4^s c^2 (R / r)^s$ by Lemma~\ref{sparselemma}.
\end{proof}
\end{lemma}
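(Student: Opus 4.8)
The plan is to construct the cover directly from a maximal $r$-sparse subset of $B$ and to bound its cardinality via Lemma~\ref{sparselemma}, which already does the hard counting work. Write $B = \cball{x_0}{R}$.

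First I would dispose of the degenerate regime $R > |X|$, where the regularity estimates are not available for the given $r$. In this case every point of $X$ lies within distance $|X| \le R$ of $x_0$, so $B = X$. If moreover $r \ge |X|$, then the single ball $\cball{x_0}{r}$ already covers $X = B$; and since $r \le 2R$ gives $(R/r)^s \ge 2^{-s}$, one has $4^s c^2 (R/r)^s \ge 2^s c^2 \ge 1$, so one ball is within the allotted count. Otherwise $r < |X|$, and I observe that $B = X = \cball{x_0}{|X|}$ is also a closed ball of radius $|X| \le R$ satisfying $r \le 2|X|$; applying the main case below with $|X|$ in place of $R$ yields a cover by at most $4^s c^2 (|X| / r)^s \le 4^s c^2 (R / r)^s$ balls, the inequality holding because the bound is increasing in the radius. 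Thus I may assume $R \le |X|$ henceforth.

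Under this assumption the hypotheses of Lemma~\ref{sparselemma} are exactly met, namely $r \le 2R$ and $R \le |X|$. That lemma furnishes a maximal $r$-sparse subset $A$ of $B$ together with the bound $\# A \le 4^s c^2 (R / r)^s$. By the remark preceding Lemma~\ref{sparselemma}, maximality of $A$ forces $B \subset \bigcup_{x \in A} \cball{x}{r}$. Hence $\{\cball{x}{r}\}_{x \in A}$ is a cover of $B$ by closed balls of radius $r$ whose cardinality satisfies the required estimate, completing the argument.

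I expect the only genuine friction to lie in the bookkeeping for the case $R > |X|$, since there the lower radius $r$ may exceed $2|X|$ and regularity cannot be invoked verbatim; this is precisely why one must reduce to $R \le |X|$ before calling Lemma~\ref{sparselemma}. Once that reduction is carried out, the statement is an immediate consequence of the cardinality bound for maximal sparse sets and their covering property.
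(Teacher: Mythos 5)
Your proof is correct and follows the same route as the paper: reduce to the case $R \leq |X|$, take a maximal $r$-sparse subset $A$ of $B$, use the covering property of maximal sparse sets, and bound $\#A$ by Lemma~\ref{sparselemma}. The only difference is that you spell out the reduction to $R \leq |X|$ in detail, which the paper leaves as a one-line remark; your bookkeeping there is accurate.
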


A \emph{cube} in $X_1^d$ is a rectangle whose sides are balls of the
same radius\,---\,this is the same as a ball in $X_1^d$.

\begin{lemma} \label{rectanglecoverlemma}
For $i = 1, \ldots, d$ let $X_i$ be a $(c_i, s_i)$-regular space,
and consider a rect\-angle $R = \bigtimes_{i = 1}^d \cball{x_i}{r_i}$
in $X_1^d$. Let $r > 0$ and let $I = \{ i; \, r_i > r \}$.
Then $R$ can be covered by $M$ cubes of radius $r$, where
	\[
	M = \prod_{i \in I} 4^{s_i} c_i^2 \left(\frac{r_i}{r}\right)^{s_i}
	\]
with the interpretation $M=1$ if $I=\emptyset$.
          
\begin{proof}
By Lemma~\ref{ballcoverlemma}, there is for each $i \in I$ a subset
$A_i$ of $X_i$ such that
	\[
	\cball{x_i}{r_i} \subset \bigcup_{a_i \in A_i} \cball{a_i}{r}
	\quad\text{and}\quad
	\# A_i \leq 4^{s_i} c_i^2 \left(\frac{r_i}{r}\right)^{s_i}.
	\]
For $i \notin I$, let $A_i = \{ x_i \}$. Then $R$ is covered by the
set of cubes of the form $\bigtimes_{i = 1}^d \cball{a_i}{r}$,
where $a_i \in A_i$ for each $i$,
and the number of such cubes is less than or equal to $M$.
\end{proof}
\end{lemma}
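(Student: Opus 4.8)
The plan is to reduce the statement to a coordinatewise covering argument and then take products, exploiting the fact that in the maximum metric a cube of radius $r$ in $X_1^d$ is exactly a product $\bigtimes_{i = 1}^d \cball{a_i}{r}$ of closed balls of radius $r$ in the factors.

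First I would treat the coordinates separately. For each $i \in I$ we have $r_i > r$, so in particular $r \leq 2 r_i$, and Lemma~\ref{ballcoverlemma} applies to the ball $\cball{x_i}{r_i}$: it yields a finite set $A_i \subset X_i$ with $\cball{x_i}{r_i} \subset \bigcup_{a_i \in A_i} \cball{a_i}{r}$ and $\# A_i \leq 4^{s_i} c_i^2 (r_i / r)^{s_i}$. For each $i \notin I$ we have $r_i \leq r$, so the single ball $\cball{x_i}{r}$ centred at $x_i$ already contains $\cball{x_i}{r_i}$; here I would simply set $A_i = \{ x_i \}$, so that $\# A_i = 1$.

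Next I would assemble the product cover. For any choice of $a_i \in A_i$ in each coordinate, the product $\bigtimes_{i = 1}^d \cball{a_i}{r}$ is a cube of radius $r$. Since each $\cball{x_i}{r_i}$ is covered by the balls $\{ \cball{a_i}{r} \}_{a_i \in A_i}$, every point of $R$ lies in at least one such product, so the collection of these cubes covers $R$. The number of cubes produced is $\prod_{i = 1}^d \# A_i$, and since $\# A_i = 1$ for $i \notin I$, this equals $\prod_{i \in I} \# A_i \leq M$, which is exactly the claimed bound.

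There is no genuine obstacle here; the lemma is a clean factorisation of the one-dimensional covering estimate. The only points requiring (routine) care are verifying the hypothesis $r \leq 2 r_i$ of Lemma~\ref{ballcoverlemma} for the indices $i \in I$\,---\,which is immediate from $r < r_i$\,---\,and observing that the product of closed balls of radius $r$ is precisely a closed ball of radius $r$ in the maximum metric, so that the product sets really are cubes of radius $r$. The case $I = \emptyset$ corresponds to every $r_i \leq r$, where $R$ itself is contained in the single cube $\bigtimes_{i = 1}^d \cball{x_i}{r}$, matching the convention $M = 1$.
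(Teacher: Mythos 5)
Your proposal is correct and follows essentially the same argument as the paper: apply Lemma~\ref{ballcoverlemma} coordinatewise for $i \in I$, take $A_i = \{x_i\}$ for $i \notin I$, and cover $R$ by the products $\bigtimes_{i=1}^d \cball{a_i}{r}$. The extra details you supply (checking $r \leq 2r_i$ for $i \in I$ and noting that products of radius-$r$ balls are cubes in the maximum metric) are sound and merely make explicit what the paper leaves implicit.
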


\section{Upper bound}\label{upperbound}

The following lemma gives the upper bound for $f_\mu$ in Theorem~\ref{thm:main}.

\begin{lemma} \label{svfublemma}
For $i = 1, \ldots, d$ let $X_i$ be a $(c_i, s_i)$-regular space, and
let $\seq x = (x_n)$ be a sequence of points in $X_1^d$. Let $\seq r = (r_n)$ be a
sequence of $d$-tuples of positive numbers. Then 
	\[
	\dimh E_{\seq r}(\seq x)
	\leq \inf \left\{
	t; \, \sum_n \Phi_{r_n}^s(t) < \infty	
	\right\} \wedge (s_1 + \ldots + s_d).
	\]

\begin{proof}
It is clear that $\dimh E_{\seq r}(\seq x) \leq \dimh X_1^d = s_1 + \ldots + s_d$.
Suppose that $t < s_1 + \ldots + s_d$ is such that
$\sum_n \Phi_{r_n}^s(t) < \infty$. Let $\delta > 0$.
Then there is some $n_0 = n_0(\delta)$ such that $r_{n, i} \leq \delta$
for every $i$ and every $n \geq n_0$, and the limsup set is included in
$\bigcup_{n \geq n_0} \crect{x_n}{r_n}$.

For each $n$, let $\tau_n$ be a permutation of $\{1, \ldots, d\}$
such that $r_{n, \tau_n(1)} \geq \ldots \geq r_{n, \tau_n(d)}$ and
let $i_n$ be such that
$t \in [s_{\tau_n(1)} + \ldots + s_{\tau_n(i_n - 1)}, \,
s_{\tau_n(1)} + \ldots + s_{\tau_n(i_n)}]$.
By Lemma~\ref{rectanglecoverlemma} there is a cover of $\crect{x_n}{r_n}$
by $M_n$ cubes of radius $r_{n, \tau_n(i_n)}$, where
	\[
	M_n \leq C \prod_{j = 1}^{i_n - 1}
	\left( \frac{r_{n, \tau_n(j)}}{r_{n, \tau_n(i_n)}} \right)^{s_{\tau_n(j)}}
	\]
for some constant $C$ that is independent of $n$. Note that if
$r_{n, \tau_n(j)}=r_{n, \tau_n(i_n)}$ for some $j<i_n$, Lemma~\ref{rectanglecoverlemma}
gives an upper bound where the product is up to the largest $j_0$ such that
$r_{n, \tau_n(j_0)}>r_{n, \tau_n(i_n)}$. The product can be extended up to $i_n-1$ by
noting that the extra terms added to the product are equal to 1. Thus
	\[
	\hmeas_\delta^t (E_{\seq r}(\seq x)) \leq
	2^t \sum_{n = n_0}^\infty M_n r_{n, \tau_n(i_n)}^t
	\leq
	2^t C \sum_{n = 1}^\infty \Phi_{r_n}^s(t).
	\]
Since the last expression is finite and independent of $\delta$
the limsup set has finite $\hmeas^t$-measure, and hence Hausdorff
dimension less than or equal to $t$.
\end{proof}
\end{lemma}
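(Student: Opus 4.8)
The plan is to establish the upper bound in the standard way, by exhibiting, for each admissible exponent $t$, economical covers of $E_{\seq r}(\seq x)$ whose total $t$-cost stays bounded as the mesh shrinks. Two separate bounds are needed. First, since $E_{\seq r}(\seq x) \subseteq X_1^d$ and the product of the $(c_i, s_i)$-regular spaces $X_i$ is itself $(\prod_i c_i, \sum_i s_i)$-regular (a ball in the max metric is a product of coordinate balls, so its measure factorises), the ambient space has Hausdorff dimension $s_1 + \ldots + s_d$; this yields $\dimh E_{\seq r}(\seq x) \leq s_1 + \ldots + s_d$ and accounts for the $\wedge$ term. Second, I would fix any $t < s_1 + \ldots + s_d$ with $\sum_n \Phi_{r_n}^s(t) < \infty$ and show $\dimh E_{\seq r}(\seq x) \leq t$.

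For the second bound, the key is the limsup structure: for every cutoff $m$ one has $E_{\seq r}(\seq x) \subseteq \bigcup_{n \geq m} \crect{x_n}{r_n}$, so it suffices to cover each rectangle in the tail economically. For a fixed $n$, I would order the side radii by a permutation $\tau_n$ with $r_{n, \tau_n(1)} \geq \ldots \geq r_{n, \tau_n(d)}$, locate the index $i_n$ with $t \in [s_{\tau_n(1)} + \ldots + s_{\tau_n(i_n - 1)}, s_{\tau_n(1)} + \ldots + s_{\tau_n(i_n)}]$, and choose the covering radius to be $\rho_n = r_{n, \tau_n(i_n)}$, i.e.~the $i_n$-th largest side radius. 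Lemma~\ref{rectanglecoverlemma} then covers $\crect{x_n}{r_n}$ by $M_n$ cubes of radius $\rho_n$, where $M_n \leq C \prod_{j < i_n} (r_{n, \tau_n(j)} / \rho_n)^{s_{\tau_n(j)}}$ with $C$ independent of $n$.

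The crux is to recognise that this choice reproduces the singular value function exactly. Indeed, $M_n \rho_n^t \leq C\, \rho_n^{\,t - \sum_{j < i_n} s_{\tau_n(j)}} \prod_{j < i_n} r_{n, \tau_n(j)}^{s_{\tau_n(j)}} = C\, \Phi_{r_n}^s(t)$ by the definition \eqref{singularvalue}. Summing over $n \geq m$, with each covering cube having diameter at most $2\rho_n$, then gives $\hmeas^t_\delta(E_{\seq r}(\seq x)) \leq 2^t C \sum_n \Phi_{r_n}^s(t)$, a finite bound independent of $\delta$; letting $\delta \to 0$ yields $\hmeas^t(E_{\seq r}(\seq x)) < \infty$ and hence $\dimh E_{\seq r}(\seq x) \leq t$. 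Taking the infimum over admissible $t$ completes the argument.

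I expect the main obstacle to be the bookkeeping in the crux step, namely verifying that covering at the scale of the $i_n$-th largest radius matches $\Phi_{r_n}^s(t)$ term for term. This requires correctly identifying which sides must be subdivided (precisely those with radius exceeding $\rho_n$, i.e.~the index set $I$ in Lemma~\ref{rectanglecoverlemma}) and handling ties carefully: when several side radii equal $\rho_n$, the factors they contribute to the displayed product are all equal to $1$, so the product may be taken up to $i_n - 1$ without changing its value. A minor additional point is ensuring the covering scales shrink, so that the bound on $\hmeas^t_\delta$ genuinely passes to $\hmeas^t$; this is arranged by pushing the cutoff $m$ out far enough that $\rho_n \leq \delta$ for all $n \geq m$.
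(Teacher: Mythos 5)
Your proposal is correct and follows essentially the same route as the paper's proof: covering each tail rectangle by cubes at the scale of the $i_n$-th largest side radius via Lemma~\ref{rectanglecoverlemma}, observing that $M_n \rho_n^t \leq C\,\Phi_{r_n}^s(t)$, and handling ties by noting the extra factors equal $1$. No substantive differences.
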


\section{Auxiliary lemmas}\label{auxiliary}
  
The following standard lemma
will be used in the proof of Theorem~\ref{thm:main}.

\begin{lemma} \label{asdivsumlemma}
Let $(\xi_n)$ be a sequence of independent random variables taking values in
$[0, 1]$, such that $\sum_n \E \xi_n = \infty$. Then almost surely,
$\sum_n \xi_n = \infty$.

\begin{proof}
If $M \leq \frac{1}{2} \sum_{n = 1}^N \E \xi_n$ then, using Markov's inequality
and the assumption that the variables $\xi_n$ are independent and take values
in $[0, 1]$,
	\begin{align*}
	\mathbf P \left\{ \sum_{n = 1}^N \xi_n \leq M \right\} &=
	\mathbf P \left\{ \sum_{n = 1}^N \left(\E \xi_n - \xi_n\right) \geq
	\sum_{n = 1}^N \E \xi_n - M \right\} \\
	&\leq
	\mathbf P \left\{ \left(\sum_{n = 1}^N\left(\E \xi_n - \xi_n\right)
        \right)^2 \geq \left(\sum_{n = 1}^N \E \xi_n - M\right)^2 \right\} \\
	&\leq
	\frac{\E \left(\sum_{n = 1}^N \left( \E \xi_n - \xi_n\right)\right)^2}
	{\left(\sum_{n = 1}^N \E \xi_n - M\right)^2}
	=
	\frac{\sum_{n = 1}^N \left(\E \xi_n^2 - \left(\E \xi_n\right)^2\right)}
	{\left(\sum_{n = 1}^N \E \xi_n - M\right)^2} \\
	&\leq
	\frac{4 \sum_{n = 1}^N \E \xi_n^2}
	{\left(\sum_{n = 1}^N \E \xi_n \right)^2}
	\leq
	\frac{4 \sum_{n = 1}^N \E \xi_n}
	{\left(\sum_{n = 1}^N \E \xi_n \right)^2}
	=
	\frac{4}{\sum_{n = 1}^N \E \xi_n}
        \le\frac 2M.     
	\end{align*}
Thus
	\begin{align*}
	\mathbf P \left\{ \sum_{n = 1}^\infty \xi_n < \infty \right\}
	&=
	\mathbf P \left(
	\bigcup_{M = 1}^\infty \bigcap_{N = 1}^\infty
	\left\{ \sum_{n = 1}^N \xi_n \leq M \right\}
	\right) \\
	&=
	\lim_{M \to \infty} \lim_{N \to \infty}
	\mathbf P \left\{ \sum_{n = 1}^N \xi_n \leq M \right\}
	= 0. \qedhere
	\end{align*}
\end{proof}
\end{lemma}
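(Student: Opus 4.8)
The plan is to run a second-moment (Chebyshev) argument. Since each $\xi_n$ is non-negative, the partial sums $S_N = \sum_{n = 1}^N \xi_n$ are non-decreasing in $N$, so $\sum_n \xi_n$ either converges to a finite limit or diverges to $+\infty$; it therefore suffices to show that $\mathbf P\{\sum_n \xi_n < \infty\} = 0$. Writing $m_N = \E S_N = \sum_{n = 1}^N \E \xi_n$, which tends to infinity by hypothesis, the goal becomes to prove that $S_N$ cannot stay bounded with positive probability.

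The key estimate I would establish first is that the variance of $S_N$ grows at most linearly in $m_N$. By independence, $\mathrm{Var}(S_N) = \sum_{n = 1}^N \mathrm{Var}(\xi_n) \leq \sum_{n = 1}^N \E \xi_n^2$, and since each $\xi_n$ takes values in $[0, 1]$ we have $\xi_n^2 \leq \xi_n$, whence $\mathrm{Var}(S_N) \leq m_N$. This is the crucial place where the boundedness hypothesis is used, and it is the one step that genuinely carries the lemma: without control on $\E \xi_n^2$ the fluctuations of $S_N$ could overwhelm its mean, and the conclusion could fail.

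Given this bound, Chebyshev's inequality yields
	\[
	\mathbf P\{ S_N \leq m_N / 2 \} \leq
	\mathbf P\{ |S_N - m_N| \geq m_N / 2 \} \leq
	\frac{4\,\mathrm{Var}(S_N)}{m_N^2} \leq \frac{4}{m_N}.
	\]
Consequently, for each fixed $M$, once $N$ is large enough that $m_N \geq 2M$ one has $\mathbf P\{S_N \leq M\} \leq \mathbf P\{S_N \leq m_N / 2\} \leq 4 / m_N$, which tends to $0$ as $N \to \infty$.

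Finally I would decompose the convergence event, using monotonicity of $S_N$, as $\{\sum_n \xi_n < \infty\} = \bigcup_{M = 1}^\infty \bigcap_{N = 1}^\infty \{S_N \leq M\}$. For each $M$ the inner intersection has probability at most $\lim_{N \to \infty} \mathbf P\{S_N \leq M\} = 0$ by the previous step, and a countable union of null events is null, so $\mathbf P\{\sum_n \xi_n < \infty\} = 0$. The only real obstacle is the variance bound; everything after it is a routine Chebyshev-plus-countable-union wrap-up.
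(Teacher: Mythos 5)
Your proposal is correct and follows essentially the same route as the paper: a second-moment (Chebyshev) bound on $\mathbf P\{S_N \leq m_N/2\}$, using independence together with $\xi_n^2 \leq \xi_n$ to get $\mathrm{Var}(S_N) \leq m_N$, followed by the decomposition $\{\sum_n \xi_n < \infty\} = \bigcup_M \bigcap_N \{S_N \leq M\}$. The paper phrases the concentration step as Markov's inequality applied to the squared deviation, which is the same estimate.
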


The following lemma is a consequence of Corollary~2.10.27 in
Federer's book~\cite{Federer69}, using that a complete regular
space is boundedly compact (meaning that every closed bounded
subset of the space is compact, or equivalently, that the space is
complete and every bounded subset is totally bounded).

\begin{lemma}[{\cite[Corollary~2.10.27]{Federer69}}] \label{dimhlemma}
Let $X$ and $Y$ be complete regular metric spaces and let $A$ be a subset of
$X \times Y$ such that
	\[
	\dimh \left\{x \in X; \, \dimh A^x \geq \beta \right\} \geq \alpha,
	\]
where
	\[
	A^x = \left\{y \in Y; \, (x, y) \in A \right\}.
	\]
Then $\dimh A \geq \alpha + \beta$.
\end{lemma}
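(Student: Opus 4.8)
The plan is to prove a lower bound on the Hausdorff dimension of a product-type set $A \subset X \times Y$ from a "fibered" dimension hypothesis, reducing to Federer's co-area/slicing inequality (Corollary 2.10.27 in \cite{Federer69}). First I would recall what that corollary provides: for suitable spaces and a set $A$, it bounds the Hausdorff measure of $A$ from below in terms of an integral over $X$ of the Hausdorff measure of the fibres $A^x$. The crucial structural point is that Federer's machinery applies in boundedly compact metric spaces, so the first step is to justify that the hypotheses of the corollary are met. Here I would invoke the parenthetical remark in the statement: a complete regular space is boundedly compact, since regularity forces every bounded set to be totally bounded (the regularity constant controls the number of $r$-balls needed to cover a ball of radius $R$, exactly as quantified in Lemma~\ref{sparselemma} and Lemma~\ref{ballcoverlemma}), and completeness then upgrades total boundedness to compactness of closed bounded sets. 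Thus $X$, $Y$ and the product $X \times Y$ all fall within the scope of the corollary.

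The main step is then to convert the dimension hypothesis into the measure-theoretic input that Federer's inequality digests. Fix $\alpha' < \alpha$ and $\beta' < \beta$. Let
	\[
	S = \left\{ x \in X; \, \dimh A^x \geq \beta \right\},
	\]
so that by hypothesis $\dimh S \geq \alpha > \alpha'$, and hence $\hmeas^{\alpha'}(S) > 0$ (indeed $= \infty$). For each $x \in S$ we have $\dimh A^x \geq \beta > \beta'$, so $\hmeas^{\beta'}(A^x) > 0$. The plan is to feed these two positivity facts into the slicing inequality, which in the relevant form reads, up to a dimensional constant,
	\[
	\hmeas^{\alpha' + \beta'}(A) \geq
	\text{const} \cdot \int_X^* \hmeas^{\beta'}(A^x) \intd{\hmeas^{\alpha'}(x)},
	\]
the integral being a lower (or upper) Lebesgue integral over $X$ with respect to $\hmeas^{\alpha'}$. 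Restricting the integrand to $x \in S$, where it is strictly positive, and using $\hmeas^{\alpha'}(S) = \infty$, forces the right-hand side to be positive. Therefore $\hmeas^{\alpha' + \beta'}(A) > 0$, which gives $\dimh A \geq \alpha' + \beta'$.

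Finally I would let $\alpha' \uparrow \alpha$ and $\beta' \uparrow \beta$ to conclude $\dimh A \geq \alpha + \beta$, which is the claim. A minor technical point to handle along the way is measurability: one must check that $S$ and the map $x \mapsto \hmeas^{\beta'}(A^x)$ are measurable enough for the integral above to make sense, but this is exactly the kind of issue Federer's corollary is stated to absorb (via outer/lower integrals), so I would lean on the cited result rather than re-prove it. The step I expect to be the main obstacle is the first one, namely pinning down precisely which hypotheses of Corollary~2.10.27 are needed and verifying them in our setting; once the bounded compactness of complete regular spaces is established and the corollary is seen to apply, the passage from dimensions to the positivity of $\hmeas^{\alpha'+\beta'}(A)$ is routine.
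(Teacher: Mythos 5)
Your proposal is correct and takes essentially the same route as the paper, which states the lemma without proof as a direct consequence of Federer's Corollary~2.10.27 together with the observation that complete regular spaces are boundedly compact. The extra steps you supply\,---\,converting the dimension hypotheses into positivity of $\hmeas^{\alpha'}$ on the slice set and of $\hmeas^{\beta'}(A^x)$ on its fibres, feeding these into the Eilenberg--Federer slicing inequality, and letting $\alpha'\uparrow\alpha$ and $\beta'\uparrow\beta$\,---\,are the standard and correct way to extract the dimension statement from the measure-theoretic form of that corollary.
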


According to the following lemma, typical limsup sets are dense.

\begin{lemma} \label{nonemptylemma}
Let $X$ be a complete separable metric space, let $\mu$ be a fully supported
probability measure on $X$ and let $\seq r$ be a sequence of positive numbers.
Then for \ourP\mu-a.e.~$\omega$, the set
$E_{\seq r}(\omega)=\limsup_n\cball{\omega_n}{r_n}$ is dense in $X$. In
particular, $E_{\seq r}(\omega)\ne\emptyset$.

\begin{proof}
If $B$ is an open ball in $X$ then $\mu(B) > 0$, and thus
$\# \{n; \, \omega_n \in B\} = \infty$ for \ourP\mu-a.e.~$\omega$
by the Borel--Cantelli lemma (or Lemma~\ref{asdivsumlemma}).
Since $X$ is separable there is a
basis for the topology on $X$ consisting of a countable family of
open balls. It follows that \ourP\mu-a.e.~$\omega$ is such that
$\# \{n; \, \omega_n \in B\} = \infty$ for every open ball $B$.
Fix such an $\omega$.
Let $B_0$ be an open ball in $X$ and let $n_0 = 0$.
Assuming that the open ball $B_{k - 1}$ and the natural number $n_{k - 1}$
have been defined, let $n_k > n_{k - 1}$ be such that
$\omega_{n_k} \in B_{k - 1}$ and let $B_k$ be an open ball such that
$\overbar B_k \subset B_{k - 1} \cap \oball{\omega_{n_k}}{r_{n_k}}$.
Then
	\[
	\emptyset \neq \bigcap_{k = 1}^\infty \overbar B_k
	\subset
	B_0 \cap \bigcap_{k = 1}^\infty \oball{\omega_{n_k}}{r_{n_k}}
	\subset
	B_0 \cap E_{\seq r}(\omega).
	\]
Since $B_0$ is arbitrary, this shows that $E_{\seq r}(\omega)$ is
dense.
\end{proof}
\end{lemma}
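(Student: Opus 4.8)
The plan is to first show that, almost surely, every ball of a fixed countable basis contains $\omega_n$ for infinitely many $n$, and then to extract from any such $\omega$ a point of $E_{\seq r}(\omega)$ inside an arbitrary open ball by a nested-ball construction.

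First I would exploit that $\mu$ is fully supported, so $\mu(B) > 0$ for every nonempty open ball $B$. Fixing such a $B$, the indicator variables $\mathbf 1_{\{\omega_n \in B\}}$ are independent, take values in $[0,1]$, and satisfy $\sum_n \ourP\mu\{\omega_n \in B\} = \infty$; Lemma~\ref{asdivsumlemma} then gives $\#\{n; \, \omega_n \in B\} = \infty$ for $\ourP\mu$-a.e.~$\omega$. Separability of $X$ supplies a countable basis of open balls $\{B^{(j)}\}_j$, and intersecting the countably many full-measure events I obtain that, for $\ourP\mu$-a.e.~$\omega$, the set $\{n; \, \omega_n \in B^{(j)}\}$ is infinite for every $j$; since every nonempty open set contains some $B^{(j)}$, the same conclusion holds for every open ball. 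I then fix one such $\omega$.

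Next I would fix an arbitrary open ball $B_0$ and build a nested sequence of closed balls shrinking to a single point of $E_{\seq r}(\omega) \cap B_0$. Inductively, given the open ball $B_{k-1}$ and an index $n_{k-1}$, I choose $n_k > n_{k-1}$ with $\omega_{n_k} \in B_{k-1}$ --- possible precisely because infinitely many indices land in $B_{k-1}$ --- and then pick an open ball $B_k$ of radius at most $2^{-k}$ with $\overbar B_k \subset B_{k-1} \cap \oball{\omega_{n_k}}{r_{n_k}}$. The closed balls $\overbar B_k$ are nested with diameters tending to zero, so by completeness their intersection is a single point $x$. This $x$ lies in $B_0$ and in $\oball{\omega_{n_k}}{r_{n_k}} \subset \cball{\omega_{n_k}}{r_{n_k}}$ for every $k$; as the $n_k$ are strictly increasing, $x$ belongs to infinitely many of the closed balls $\cball{\omega_n}{r_n}$, whence $x \in E_{\seq r}(\omega)$. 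Since $B_0$ was arbitrary, $E_{\seq r}(\omega)$ meets every open ball, hence is dense, and in particular nonempty.

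The proof is short, so the only genuinely delicate points are organisational. The decisive one is that I need the indices $n_k$ to be strictly increasing, which is exactly why I use the infinitude of hits provided by Lemma~\ref{asdivsumlemma} rather than mere positivity of $\mu(B)$. The remaining care is that at each stage $\omega_{n_k}$ is an interior point of the open set $B_{k-1} \cap \oball{\omega_{n_k}}{r_{n_k}}$, so a sufficiently small closed ball about $\omega_{n_k}$ fits inside it; completeness of $X$ then furnishes the point in the intersection, and no compactness is required.
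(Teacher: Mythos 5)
Your proof is correct and follows essentially the same route as the paper's: almost-sure infinitude of hits in each ball of a countable basis via Lemma~\ref{asdivsumlemma}, followed by a nested-ball construction inside an arbitrary $B_0$. The only difference is that you explicitly impose radii $\leq 2^{-k}$ so that Cantor's intersection theorem applies in a complete (non-compact) space --- a detail the paper leaves implicit --- which is a small improvement in rigour rather than a different argument.
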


\section{Lower bound} \label{mainproofsection}

This section involves several $d$-dimensional objects,
whose $(d - 1)$-dimensional counterparts will be denoted by appending
a prime. Thus if $a$ is something $d$-dimensional, then $a' = \pi a$,
where $\pi$ is the appropriate projection to the first $d - 1$ coordinates.
For example, if
	\[
	s = (s_1, \ldots, s_d), \qquad
	\seq r = \big( (r_{n, 1}, \ldots, r_{n, d}) \big)_{n \in \nat}, \qquad
	\mu = \mu_1 \times \ldots \times \mu_d,
	\]
then
	\[
	s' = (s_1, \ldots, s_{d - 1}), \qquad
	\seq r' = \big( (r_{n, 1}, \ldots, r_{n, d - 1})\big)_{n \in \nat}, \qquad
	\mu' = \mu_1 \times \ldots \times \mu_{d - 1}.
	\]

According to the next lemma, the dimension $f_\mu(\seq r)$ of a typical limsup
set in $d$-dimensional product space is bounded from below by the corresponding
number $f_{\mu'}(\seq r')$ in $(d-1)$-dimensional product space. 

\begin{lemma} \label{projectionlemma}
Let $X_i$ be a complete separable metric space and $\mu_i$ be a probability
measure on $X_i$ for $i = 1, \ldots, d$. Let $\seq r = (r_n)$ be a
sequence of $d$-tuples $r_n = (r_{n, 1}, \ldots, r_{n, d})$. Set
$\mu = \bigtimes_{i = 1}^d \mu_i$.  Then
$f_\mu(\seq r) \geq f_{\mu'}(\seq r')$.

\begin{proof}
Let
	\[
	G = \left\{
	\omega; \, \dimh E_{\seq r}(\omega) \geq f_{\mu'}(\seq r')
	\right\},
	\]
and for each $\omega' \in \Omega' = \left(X_1^{d - 1}\right)^\nat$ let
	\[
	G^{\omega'} = \left\{ \sigma \in \left(X_d\right)^\nat; \,
	(\omega', \sigma) \in G \right\}.
	\]
Since $\ourP\mu$ can be identified with
$\ourP{\mu'} \times \ourP{\mu_d}$ and $G$ is universally measurable
\cite[Lemma 3.1]{JJKLS14}, Fubini's theorem gives
	\[
	\ourP\mu(G) =
	\int \ourP{\mu_d}\big(G^{\omega'}\big) \intd{\ourP{\mu'}}(\omega').
	\]
Thus it suffices to show that for every $\omega'$, the set
of $\sigma$ such that
	\[
	\dimh E_{\seq r}(\omega', \sigma) \geq \dimh E_{\seq r'}(\omega')
	\]
has full \ourP{\mu_d}-measure.

So fix $\omega'$ and let $s < \dimh E_{\seq r'}(\omega')$. Then
by~\cite[Theorems 48 and~57]{Rogers70} there is a compact subset
$K$ of $E_{\seq r'}(\omega')$ such that the measure
$\theta = \restr{\hmeas^s}{K}$ is positive and finite.
Let $\pi$ be the projection $X_1^d \to X_1^{d - 1}$ and for 
$\xi \in X_1^{d - 1}$ let $(n_k(\xi))$ be the increasing enumeration
of $\{n ; \, \xi \in \crect{\omega_n'}{r_n'}\}$. (Note that this set
is infinite for every $\xi\in E_{\seq r'}(\omega')$ and thus for
$\theta$-a.e.~$\xi$.) Then
	\begin{align*}
	\int \theta\big(\pi (E_{\seq r}(\omega', \sigma))\big)
	\intd{\ourP{\mu_d}}(\sigma)
	&=
	\iint \chi_{\pi (E_{\seq r}(\omega', \sigma))}(\xi)
	\intd\theta(\xi) \intd{\ourP{\mu_d}}(\sigma) \\
	&=
	\iint \chi_{\pi (E_{\seq r}(\omega', \sigma))}(\xi)
	\intd{\ourP{\mu_d}}(\sigma) \intd\theta(\xi) \\
	&=
	\int \ourP{\mu_d}\left\{ \limsup_k \cball{\sigma_{n_k(\xi)}}{r_{n_k(\xi),d}}
	\neq \emptyset \right\}
	\intd\theta(\xi) \\
        &=
        \theta(K),
	\end{align*}
using Lemma~\ref{nonemptylemma} in the last step. Thus for
\ourP{\mu_d}-a.e.~$\sigma$ the set $\pi (E_{\seq r}(\omega', \sigma))$ has full
$\theta$-measure, and in particular positive $\hmeas^s$-measure.
For such $\sigma$,
	\[
	\dimh E_{\seq r}(\omega', \sigma) \geq
	\dimh \pi (E_{\seq r}(\omega', \sigma)) \geq s.
	\]
Letting $s \to \dimh E_{\seq r'}(\omega')$ along a countable set concludes
the proof.
\end{proof}
\end{lemma}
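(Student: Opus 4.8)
The plan is to show that $\dimh E_{\seq r}(\omega) \geq \dimh E_{\seq r'}(\omega')$ for $\ourP\mu$-a.e.\ $\omega$, writing $\omega = (\omega', \sigma)$ with $\omega' \in \Omega' = (X_1^{d-1})^\nat$ and $\sigma \in (X_d)^\nat$; since $f_\mu(\seq r)$ and $f_{\mu'}(\seq r')$ are the almost sure values of the two dimensions, this yields the claim. The geometric input is that the projection $\pi \colon X_1^d \to X_1^{d-1}$ is $1$-Lipschitz for the max metric, so it does not increase Hausdorff dimension and satisfies $\pi \crect{\omega_n}{r_n} = \crect{\omega_n'}{r_n'}$; hence $\dimh E_{\seq r}(\omega) \geq \dimh \pi E_{\seq r}(\omega)$, and it suffices to exhibit a large subset of $\pi E_{\seq r}(\omega)$. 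Note that one only has the inclusion $\pi E_{\seq r}(\omega) \subset E_{\seq r'}(\omega')$ and not equality, which is why a probabilistic argument is needed. Using the identification $\ourP\mu = \ourP{\mu'} \times \ourP{\mu_d}$ together with Fubini's theorem (the relevant set being universally measurable by \cite[Lemma 3.1]{JJKLS14}), I would reduce to proving that for each \emph{fixed} $\omega'$ and for $\ourP{\mu_d}$-a.e.\ $\sigma$ one has $\dimh E_{\seq r}(\omega', \sigma) \geq \dimh E_{\seq r'}(\omega')$.

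So fix $\omega'$ and fix $s < \dimh E_{\seq r'}(\omega')$. By a standard property of Hausdorff measures \cite{Rogers70} there is a compact set $K \subset E_{\seq r'}(\omega')$ carrying a positive and finite measure $\theta = \restr{\hmeas^s}{K}$. The goal is then to show that $\pi E_{\seq r}(\omega', \sigma)$ has full $\theta$-measure for $\ourP{\mu_d}$-a.e.\ $\sigma$: this forces $\hmeas^s(\pi E_{\seq r}(\omega', \sigma)) > 0$, hence $\dimh \pi E_{\seq r}(\omega', \sigma) \geq s$ and therefore $\dimh E_{\seq r}(\omega', \sigma) \geq s$; letting $s$ increase to $\dimh E_{\seq r'}(\omega')$ along a countable sequence then closes the fixed-$\omega'$ step.

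To establish the full-measure claim I would compute the averaged mass $\int \theta(\pi E_{\seq r}(\omega', \sigma)) \intd{\ourP{\mu_d}}(\sigma)$. Writing the integrand as $\int \chi_{\pi E_{\seq r}(\omega', \sigma)}(\xi) \intd\theta(\xi)$ and interchanging the order of integration by Tonelli's theorem, the problem localises at each $\xi \in K$: since $K \subset E_{\seq r'}(\omega')$, the index set $\{ n; \, \xi \in \crect{\omega_n'}{r_n'}\}$ is infinite for $\theta$-a.e.\ $\xi$, with increasing enumeration $(n_k(\xi))$, and $\xi \in \pi E_{\seq r}(\omega', \sigma)$ precisely when the one-dimensional limsup set $\limsup_k \cball{\sigma_{n_k(\xi)}}{r_{n_k(\xi), d}}$ in $X_d$ is nonempty. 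By Lemma~\ref{nonemptylemma} (applied on $\operatorname{supp}\mu_d$, on which $\mu_d$ is fully supported) this holds for $\ourP{\mu_d}$-a.e.\ $\sigma$, so the inner probability equals $1$ for $\theta$-a.e.\ $\xi$ and the averaged mass equals $\theta(K)$. Since $\theta(\pi E_{\seq r}(\omega', \sigma)) \leq \theta(K)$ pointwise, equality of the average with $\theta(K)$ forces $\theta(\pi E_{\seq r}(\omega', \sigma)) = \theta(K)$ for $\ourP{\mu_d}$-a.e.\ $\sigma$, as required.

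I expect the main obstacle to be the bookkeeping around measurability and the order of the quantifiers rather than a single hard estimate: one must justify applying Fubini to a set that is only universally measurable, verify that the fibrewise index sets $\{n; \, \xi \in \crect{\omega_n'}{r_n'}\}$ are infinite on a set of full $\theta$-measure, and invoke Lemma~\ref{nonemptylemma} on the support of $\mu_d$ rather than on all of $X_d$. Once these points are handled, the reduction to the one-dimensional nonemptiness statement is the crux of the argument, and that statement is exactly what Lemma~\ref{nonemptylemma} provides.
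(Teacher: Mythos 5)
Your proposal is correct and follows essentially the same route as the paper's proof: the Fubini reduction to fixed $\omega'$, the Frostman-type compact set $K$ with $\theta = \restr{\hmeas^s}{K}$, the interchange of integrals to reduce to the fibrewise nonemptiness statement of Lemma~\ref{nonemptylemma}, and the limit $s \to \dimh E_{\seq r'}(\omega')$ along a countable set. Your remark about applying Lemma~\ref{nonemptylemma} on $\operatorname{supp}\mu_d$ is a sensible precision that the paper leaves implicit.
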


\begin{proof}[Proof of Theorem~\ref{thm:main}]
The upper bound for $f_\mu(\seq r)$ claimed by the theorem follows from
Lemma~\ref{svfublemma}, so it only remains to prove the lower bound. Assume
first that 
        \begin{equation}\label{firstcase}
        r_{n, 1} \geq \ldots \geq r_{n, d}\qquad\text{for every }n;
        \end{equation}
then the proof is by induction on $d$. The theorem is known to hold for
$d = 1$, see \cite[Theorem~2.1]{JJKLSX17} and~\cite[Proposition~4.7]{JJKLS14}.
For $d \geq 2$ there are two cases.

\emph{Case 1.} Suppose that
$\sum_n \Phi_{r_n'}^{s'}(s_1 + \ldots + s_{d - 1}) < \infty$.
Then
	\[
	f_\mu(\seq r) \geq
	f_{\mu'}(\seq r') =
	\inf\left\{
	t; \, \sum_n \Phi_{r_n'}^{s'}(t) < \infty \right\}
	=
	\inf\left\{
	t; \, \sum_n \Phi_{r_n}^s(t) < \infty \right\}.		
	\]
Here the inequality is by Lemma~\ref{projectionlemma}, the first equality
is by the induction hypothesis and the second equality holds since
$\Phi_{r_n'}^{s'}(t) = \Phi_{r_n}^s(t)$ for $t \in [0, s_1 + \ldots + s_{d - 1}]$
by the assumption \eqref{firstcase}.

\emph{Case 2.} Suppose that $u \in [0, s_d]$ is such that
$\sum_n \Phi_{r_n}^s(s_1 + \ldots + s_{d - 1} + u) = \infty$.
For $\omega \in \Omega$ and $x' \in X_1^{d - 1}$ let
	\[
	E_{\seq r}^{x'} (\omega) =
	\left\{x_d \in X_d; \, (x', x_d) \in E_{\seq r} (\omega)\right\}
	\]
and let $\big(n_k^{x'}(\omega)\big)$ be the increasing enumeration of
$\big\{n; \, x' \in \crect{\omega_n'}{r_n'} \big\}$. Note that this set is
infinite if $x'\in E_{\seq r'}(\omega')$. Then
	\[
	E_{\seq r}^{x'} (\omega) =
	\limsup_k \cball{\omega_{n_k^{x'}(\omega), d}}{r_{n_k^{x'}(\omega), d}}.
	\]
Summing the $u$:th powers of the radii of the balls in the last expression
yields
	\begin{equation} \label{radiisumeq} 
	\sum_k r_{n_k^{x'}(\omega), d}^u =
	\sum_n \chi_{\crect{\omega_n'}{r_n'}}(x') \, r_{n, d}^u =
	\sum_n \chi_{\crect{x'}{r_n'}}(\omega_n') \, r_{n, d}^u.
	\end{equation} 
The last sum is a sum of independent random variables, and the sum of
their expectations is bounded from below by
	\[
	\sum_n \left(\prod_{i = 1}^{d - 1}c_i^{-1} r_{n, i}^{s_i}\right)r_{n, d}^u
	=
	c\sum_n \Phi_{r_n}^s(s_1 + \ldots + s_{d - 1} + u) = \infty.
	\]
Lemma~\ref{asdivsumlemma} then implies that~\eqref{radiisumeq}
diverges for \ourP{\mu'}-a.e.~$\omega'$. For such $\omega'$, the inequality 
$\dimh E_{\seq r}^{x'} (\omega', \sigma) \geq u$ is valid for
\ourP{\mu_d}-a.e.~$\sigma$ by the theorem for $d = 1$.

Thus, for each fixed $x'$, Fubini's theorem implies
	\[
	\ourP\mu \left\{\dimh E_{\seq r}^{x'} \geq u \right\} =
	\ourP{\mu'} \times \ourP{\mu_d} \left\{
	\dimh E_{\seq r}^{x'} \geq u
	\right\} = 1.
	\]
By Fubini's theorem it follows that
	\[
	\ourP\mu \times \mu'
	\left\{ (\omega, x'); \,
	\dimh E_{\seq r}^{x'}(\omega) \geq u	\right\} = 1,
	\]
and thus \ourP\mu-a.e.~$\omega$ is such that
	\[
	\mu'\left\{
	x'; \, \dimh E_{\seq r}^{x'}(\omega) \geq u	\right\} = 1.
	\]
For such $\omega$,
	\[
	\dimh E_{\seq r}(\omega) \geq s_1 + \ldots + s_{d - 1} + u
	\]
by Lemma~\ref{dimhlemma}, since $\mu'$ is
$(s_1 + \ldots + s_{d - 1})$-regular. Thus
	\begin{align*}
	f_\mu(\seq r) &\geq	s_1 + \ldots + s_{d - 1} +
	\sup \left\{u \in [0, s_d]; \,
	\sum_n \Phi_{r_n}^s(s_1 + \ldots + s_{d - 1} + u) = \infty \right\} \\
	&=
	\sup \left\{ t \leq s_1 + \ldots + s_d; \,
	\sum_n \Phi_{r_n}^s(t) = \infty \right\} \\
	&=
	\inf \left\{
	t; \, \sum_n \Phi_{r_n}^s(t) < \infty \right\} \wedge
	(s_1 + \ldots + s_d).
	\end{align*}

The theorem is now proved in the case when $r_n$ is decreasing for every $n$.
It follows that the theorem holds when all the $r_n$ are ordered in
the same way, that is, when there is a permutation $\tau$ of
$\{1, \ldots, d\}$ such that $r_{n, \tau(1)} \geq \ldots \geq r_{n, \tau(d)}$
for every $n$, since the singular value function and the Hausdorff dimension
do not change if the spaces $\{ (X_i, \mu_i) \}$ are relabelled.
In general, there is for each $n$ a permutation
$\tau_n$ such that $r_{n, \tau_n(1)} \geq \ldots \geq r_{n, \tau_n(d)}$.
Given a permutation $\tau$, let $N(\tau)$ be the set of $n$ such that
$\tau_n = \tau$ and let
$E_{\seq r}^\tau(\omega) = \limsup_{n \in N(\tau)} \crect{\omega_n}{r_n}$.
Then for \ourP\mu-a.e.~$\omega$,
	\begin{align*}
	\dimh E_{\seq r}(\omega) &=
	\dimh \left( \bigcup_\tau E_{\seq r}^\tau(\omega) \right)
	=
	\max_\tau \left( \dimh E_{\seq r}^\tau(\omega) \right) \\
	&=
	\max_\tau \left(
	\sup\left\{	t \leq s_1 + \ldots + s_d; \,
	\sum_{n \in N(\tau)} \Phi_{r_n}^s(t) = \infty
	\right\} \right) \\
	&=
	\sup\left( \bigcup_\tau \left\{ t \leq s_1 + \ldots + s_d; \,
	\sum_{n \in N(\tau)} \Phi_{r_n}^s(t) = \infty
	\right\} \right)\\
	&=
	\sup\left\{	t \leq s_1 + \ldots + s_d; \,
	\sum_n \Phi_{r_n}^s(t) = \infty
	\right\}. \qedhere
	\end{align*}
\end{proof}

\section{Convex bodies uniformly distributed in the unit cube} \label{altsection}

In this section it is shown how the method can be applied to give a new
shorter proof of the main result of \cite{JJKLS14}. Let $X_1^d$ be the unit cube
in $\rea^d$ and let $\mu$ be the uniform
probability measure on $X_1^d$. If $\seq K = (K_n)$ is a sequence of
convex bodies in $\rea^d$ containing $0$, let
	\[
	g(\seq K) = \ourP\mu\text{-a.s.~value of }
	\dimh\left(
	\limsup_n \big(\omega_n + K_n\big)
	\right).
	\]
By John's theorem~\cite{John48} there is a sequence $\seq E = (E_n)$
of ellipsoids such that $E_n \subset K_n \subset \widehat E_n$,
where $\widehat E_n$ is the ellipsoid obtained by dilating $E_n$ by a
factor of $d$ around its centre. 
Then $g(\seq E) \leq g(\seq K) \leq g\big(\seq{\widehat E}\big)$.
For each $n$ let $r = (r_1, \ldots, r_d)$ be the lengths of the
semiaxes of $E_n$ in decreasing order. Define $\mathbf 1 = (1, \ldots, 1)$.

\begin{theorem} \label{convthm}
In the setting above,
	\[
	g(\seq K) = \inf\left\{t; \, \sum_n \Phi_{r_n}^{\mathbf 1}(t) < \infty
	\right\} \wedge d.
	\]

\begin{proof}
It is clear that $g\big(\seq{\widehat E}\big) \leq d$, and a slight
reformulation of Lemma~\ref{svfublemma} implies that if
$\sum_n \Phi_{r_n}^{\mathbf 1}(t) < \infty$ then
$g\big(\seq{\widehat E}\big) \leq t$.  Conversely, if $t \leq d$ and
$\sum_n \Phi^{\mathbf 1}_{r_n}(t) = \infty$ then $g(\seq E) \geq t$.
This can be proved by induction on $d$. For $d = 1$ the statement
reduces to a known result as in Section~\ref{mainproofsection}.

Let $d \geq 2$. For each $n$ there is some $i = i(n)$ such
that $|v_n\cdot e_i| \geq |v_n| / \sqrt{d}$, where $v_n$ is the shortest
semiaxis of $E_n$. Let $N(i) = \{n; \, i(n) = i\}$. Then there must be some
$i$ such that $\sum_{n \in N(i)} \Phi^{\mathbf 1}_{r_n}(t) = \infty$.
By considering a subsequence of $\seq E$ and relabelling the coordinate
axes, it may thus be assumed that $|v_n\cdot e_d| \geq |v_n|/\sqrt{d}$
for every $n$. It follows that
$|\pi (w)| \geq |w| / \sqrt{d}$ for every vector $w$ in $\rea^d$
orthogonal to $v_n$, where $\pi$ is the projection
$X_1^d \to X_1^{d - 1}$. In particular, the lengths of the
semiaxes of $\pi(E_n)$ are comparable to $(r_{n, 1}, \ldots, r_{n, d - 1})$.
Now there are two cases, just as in Section~\ref{mainproofsection}.

\emph{Case 1.} If $t \leq d - 1$ then $g(\seq E) \geq t$ by the
induction hypothesis and an obvious modification of Lemma~\ref{projectionlemma}.

\emph{Case 2.} If $t = d - 1 + u$ where $u \in [0, 1]$, let
$E_n'$ be the ellipsoid in $X_1^{d - 1}$ obtained by dilating
$\pi(E_n)$ by a factor of $1 / 2$ around its centre. Then
the lengths of the semiaxes of $E_n'$ are comparable to
$r_n' = (r_{n, 1}, \ldots, r_{n, d - 1})$, and for
$x' \in E_n'$ the diameter of
$E_n^{x'} = \{x_d \in X_d; \, (x', x_d) \in E_n \}$
is comparable to $r_{n, d}$. Thus for every fixed $x' \in X_1^{d - 1}$,
	\[
	\sum_n \E l_n(\omega)^u \geq
	c\sum_n \Phi_{r_n}^{\mathbf 1}(t) = \infty,
	\]
where $l_n(\omega) = |(\omega_n + E_n)^{x'}|$ and $c$ is a constant. Using
Lemma~\ref{asdivsumlemma}, Fubini's theorem and Lemma~\ref{dimhlemma} as in
Section~\ref{mainproofsection} then shows that
	\[
	\dimh\left(	\limsup_n \big(\omega_n + E_n\big) \right)
	\geq d - 1 + u = t
	\]
for \ourP\mu-a.e.~$\omega$.
\end{proof}
\end{theorem}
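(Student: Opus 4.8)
The plan is to work with the ellipsoids supplied by John's theorem and reduce the problem to the inductive scheme of Section~\ref{mainproofsection}. Since $\widehat E_n$ is $E_n$ scaled by the fixed factor $d$, replacing $r_n$ by $d\,r_n$ changes neither $\inf\{t;\,\sum_n\Phi_{r_n}^{\mathbf 1}(t)<\infty\}$ nor $d$; so it is enough to prove the displayed formula as an upper bound for $g\big(\seq{\widehat E}\big)$ and as a lower bound for $g(\seq E)$, after which the sandwich $g(\seq E)\le g(\seq K)\le g\big(\seq{\widehat E}\big)$ closes the gap. For the upper bound I would note that an ellipsoid with semiaxis lengths $r_n$ has, up to a dimensional constant, the same covering numbers by cubes as the axis-aligned rectangle with side radii $r_n$, irrespective of the ellipsoid's orientation; feeding this into the proof of Lemma~\ref{svfublemma} and using that the uniform measure on the cube is a product of $1$-regular measures (so $s=\mathbf 1$ and $s_1+\ldots+s_d=d$) yields $g\big(\seq{\widehat E}\big)\le\inf\{t;\,\sum_n\Phi_{r_n}^{\mathbf 1}(t)<\infty\}\wedge d$.

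For the lower bound I would induct on $d$, the case $d=1$ being the known interval result. Fix $t\le d$ with $\sum_n\Phi_{r_n}^{\mathbf 1}(t)=\infty$; the goal is $g(\seq E)\ge t$. The key preliminary for $d\ge 2$ is an orientation normalisation: to each $E_n$ attach a coordinate direction $e_{i(n)}$ carrying a definite share of the shortest semiaxis $v_n$, namely $|v_n\cdot e_{i(n)}|\ge |v_n|/\sqrt d$. Grouping the indices by $i(n)$, some group $N(i)$ must inherit the full divergence; after passing to this subsequence and relabelling the axes I may assume $|v_n\cdot e_d|\ge |v_n|/\sqrt d$ for every $n$. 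This is precisely the geometric input ensuring that the projection $\pi$ onto the first $d-1$ coordinates is nondegenerate on the directions orthogonal to $v_n$ --- one verifies $|\pi(w)|\ge |w|/\sqrt d$ for $w\perp v_n$ --- so that $\pi(E_n)$ has semiaxes comparable to $r_n'=(r_{n,1},\ldots,r_{n,d-1})$ and each fibre $E_n^{x'}$ has diameter comparable to $r_{n,d}$.

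The induction now splits exactly as in Section~\ref{mainproofsection}. If $t\le d-1$, an obvious analogue of Lemma~\ref{projectionlemma} together with the inductive hypothesis applied to the projected ellipsoids gives $g(\seq E)\ge t$. If $t=d-1+u$ with $u\in[0,1]$, I would fix $x'$ and regard the fibre of $\limsup_n(\omega_n+E_n)$ over $x'$ as a one-dimensional limsup set built from intervals of lengths $l_n(\omega)=|(\omega_n+E_n)^{x'}|$. Comparability of these lengths to $r_{n,d}$ gives $\sum_n\E l_n(\omega)^u\ge c\sum_n\Phi_{r_n}^{\mathbf 1}(t)=\infty$, so Lemma~\ref{asdivsumlemma} makes the length sum diverge almost surely and the $d=1$ result forces the fibre to have dimension at least $u$. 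Fubini's theorem upgrades this to a full-measure statement in $x'$, and since $\mu'$ is $(d-1)$-regular, Lemma~\ref{dimhlemma} lifts the fibrewise bound to $\dimh\ge (d-1)+u=t$. Taking the supremum over admissible $t$ completes the lower bound.

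The step I expect to be delicate is the orientation argument: making rigorous that forcing the shortest semiaxis to point (up to the $\sqrt d$ loss) along $e_d$ simultaneously controls the projected ellipsoid and all of its fibres, so that the quantities $r_n'$ and $r_{n,d}$ genuinely govern the $(d-1)$-dimensional and fibrewise behaviour. Once this comparability is in place the remaining ingredients --- the covering estimate, the second-moment divergence of Lemma~\ref{asdivsumlemma}, and the Fubini/codimension bookkeeping through Lemma~\ref{dimhlemma} --- transfer from the product-space proof with nothing more than changes in constants.
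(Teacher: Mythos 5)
Your proposal is correct and follows essentially the same route as the paper's proof: the John's-theorem sandwich, the upper bound via a reformulation of Lemma~\ref{svfublemma}, and the lower bound by induction with the pigeonhole orientation normalisation $|v_n\cdot e_{i(n)}|\ge|v_n|/\sqrt d$, the projection case, and the fibrewise argument through Lemmas~\ref{asdivsumlemma}, \ref{projectionlemma} and~\ref{dimhlemma}. The only device you leave implicit is the paper's dilation of $\pi(E_n)$ by a factor of $1/2$, which is exactly what makes the fibre diameters comparable to $r_{n,d}$ uniformly over the relevant set of $x'$ when bounding $\E l_n(\omega)^u$ from below.
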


\bibliographystyle{plain}
\bibliography{references}
\end{document}